\newtheorem{theorem}{Theorem}[subsection]
\numberwithin{theorem}{subsection}
\newtheorem{lemma}[theorem]{Lemma}
\newtheorem{corollary}[theorem]{Corollary}
\newtheorem{proposition}[theorem]{Proposition}
\newtheorem*{varthm}{}
\theoremstyle{remark}
\newtheorem{remark}[theorem]{Remark}
\theoremstyle{definition}
\newtheorem{definition}[theorem]{Definition}
\def\thmhead@plain#1#2#3{%
  \thmname{#1}\thmnumber{\@ifnotempty{#1}{ }\@upn{#2}}%
  \thmnote{ {\the\thm@notefont#3}}}
\let\thmhead\thmhead@plain
\newcommand{\proj}{\text{pr}}
\newcommand{\Osh}{\mathcal{O}}
\newcommand{\Spec}{{\rm Spec}}
\newcommand{\leftp}{\left(}
\newcommand{\rightp}{\right)}
\newcommand{\supp}{{\normalfont\text{supp}}}
\newcommand{\Orb}[1]{{O(#1)}}
\newcommand{\ellm}[1]{{{\ell^{S^*}}(#1)}}
\newcommand{\XF}[1]{{\overline{X}^{S^*}(#1)}}
\newcommand{\Xo}[1]{{\overline{X}(#1)}}
\newcommand{\OrbF}[1]{{{\rm Orb}_F(#1)}}
\DeclareMathOperator{\sheafhom}{\mathscr{H}\text{\kern -3pt {\textit{om}}}}
\DeclareMathOperator{\sheafdif}{\mathscr{D}\text{\kern -1pt {\textit{if}}}}
\@date \else {\vskip3ex \centering\footnotesize\@date\par\vskip1ex}\fi
\else \@footnotetext{\@setdate}\fi}
\title[Cohomology  of compactified Deligne--Lusztig varieties]{Geometry and cohomology of compactified Deligne--Lusztig varieties}
\author{Yingying Wang}
\keywords{Deligne--Lusztig variety; Finite groups of Lie type; Coherent cohomology}
\subjclass[2020]{Primary: 20G40, Secondary: 14G17}
\begin{document}
\maketitle
\begin{abstract}

For connected reductive groups together with a Frobenius root $F$, we show that
the cohomology of the structure sheaf and respectively the canonical sheaf for compactified Deligne--Lusztig varieties associated to an element  in the free monoid generated by the simple reflections is isomorphic to that of a minimal length element in an $F$-conjugacy class in the Weyl group.\end{abstract}
\tableofcontents
\section{Introduction}\label{intro}

\subsection{Background}\label{secbackground}
Let $q$ be a power of a prime $p$, $k_0=\mathbb F_{q^d}$ for a positive integer $d$, and $k$ an algebraic closure of $\mathbb{F}_p$ containing $k_0$. Let $G_0$ be a connected reductive $k_0$-group with $q^d$-Frobenius $F_0:G_0\to G_0$, and let $G$  be the base change of $G_0$ to $k$ together with an endomorphism $F:G\to G$ such that $F^d=F_0\otimes_{k_0}{\rm Id}$. 
 The group of fixed points $G^F$ 
 is known as a finite group of Lie type. Examples of such groups include  ${\rm GL}_n(\mathbb{F}_{q})$ and the subgroup of unitary matrices in ${\rm GL}_n(\mathbb{F}_{q^2})$.

Let $T\subset B$ be an $F$-stable maximal torus and an $F$-stable Borel subgroup of $G$.
Denote by $W:=N(T)/T$ the corresponding Weyl group and let $S\subseteq W$ be the set of simple reflections.
For any $w\in W$, the associated Deligne--Lusztig variety is denoted by $X(w)$. Throughout the introduction, we fix a reduced expression of $w$ and denote by $\overline{X}(w)$ the corresponding  smooth compactification of $X(w)$ (\Cref{2.1definition}).  

 In \cite{DL}, Deligne and Lusztig considered the $\ell$-adic cohomology of $X(w)$ and certain Galois coverings of $X(w)$ to study the irreducible
  $G^F$-representations over fields of characteristic zero. 
The de Rham and mod $p$ cohomology of $X(w)$ and $\overline{X}(w)$ are less understood than their $\ell$-adic counterpart. 
However, for ${\rm GL}_2(\mathbb{F}_q)$ and $w$ the Coxeter element, the de Rham cohomology of $\overline{X}(w)$ is not mysterious because $\overline{X}(w)$ is isomorphic to the projective line.

The simplicity in the situation of $\overline{X}(w)$ for ${\rm GL}_2(\mathbb{F}_q)$ does not carry over to higher dimensions. The smooth compactification $\overline{X}(w)$ is constructed in loc. cit. as the transversal intersection between the graph of  $F$ and the desingularisation of the Schubert variety associated to $w$ constructed by Demazure and Hansen. Unlike 
 these desingularisations of Schubert varieties,  $\overline{X}(w)$ is not naturally an iterated $\mathbb{P}^1$-fibration. Thus the cohomology groups of  $\overline{X}(w)$ cannot be computed in the same way.

In \cite{Wang22},  we described the cohomology of the structure sheaf of  $\overline{X}(w)$ for any $w\in W$ in the case of  ${\rm GL}_n(\mathbb{F}_q)$. Our main theorem showed that  
the cohomology of the structure sheaf of  $\overline{X}(w)$ is always isomorphic to that of $\overline{X}(c)$, where $c$ is a Coxeter element in a parabolic subgroup of $W$ with $\supp(c)=\supp(w):=\{s\in S \ \vert \ s\leq w\}$. 
As a consequence of this  theorem, we were able to describe the compactly supported integral $p$-adic cohomology of any Deligne--Lusztig variety $X(w)$ for  ${\rm GL}_n(\mathbb{F}_q)$. 

\subsection{Main results}

The aim of this paper is to extend the main theorem of \cite{Wang22} to all pairs $(G,F)$ as in \textsection 1.1. 
After fixing a suitable pair $T\subset B$ of $F$-stable maximal torus and  Borel subgroup, we may assume that $F(S)=S$ without loss of generality \cite[\textsection 4.2]{DigneMichelBook}. 
In fact, when  $G$ is semisimple, $F$ would be the Frobenius endomorphism (i.e. $d=1$) except for the cases for Suzuki and Ree groups, for which $d=2$ and additional assumptions on $q$ are required
(\Cref{secsuzukiree}).

In our setting, we encounter the minimal length elements in the $F$-conjugacy classes of $W$ (\Cref{subsecFconjugacyclassesandFcyclicshifts}), which are special elements in $W$ 
with nice properties \cite[Theorem 2.6]{GeckKimPfeifferMinlength}, \cite[Theorem 3.1]{HeNieminlength}.
When $G^F$ is ${\rm GL}_n(\mathbb{F}_q)$, they are precisely the Coxeter elements in parabolic subgroups of $W$. In general, Coxeter elements of $W$ (in the sense of  \Cref{conventionforcoxeter}) are examples of minimal length elements in $F$-conjugacy classes.   
 
\begin{varthm}[\textbf{\Cref{mainthm}}]\label{introtheorem}
For every $w\in W$
there exists 
an $F$-conjugacy class $C$ in $W$ and a minimal length element $x_0\in C$ such that  for all $i\geq 0$, there are $G^F$-equivariant isomorphisms
	 	\begin{equation*}
		H^i\left(\Xo{w},\mathcal{F}_{\Xo{w}}\right)\overset{\sim}{\longrightarrow} H^i\left(\Xo{x_0},\mathcal{F}_{\Xo{x_0}}\right),
	\end{equation*}
where $\mathcal{F}\in \{\mathcal{O}, \omega\}$ is either the structure sheaf or the canonical sheaf. 
	 \end{varthm}

Note that the  $F$-conjugacy class $C$ in the theorem may not contain $w$. However, for any given $w$, one can determine $x_0$ and $C$ explicitly by following our 
 recipe given in the proof of \Cref{mainthm}. We will also show that the cohomology groups in the theorem do not depend on the choice of reduced expressions of $w$ and $x_0$ and their corresponding smooth compactifications. 

On the other hand, smooth compactifications of $X(w)$ given by different reduced expressions of $w$ are not necessarily isomorphic. 
 To avoid this ambiguity, we work with 
 compactified 
Deligne--Lusztig varieties  $X(\underline{w_1},\ldots, \underline{w_r})$  associated to a tuple of Weyl group elements, cf. Definition \ref{dfncompactifieddlvar}, \cite[Definition 2.3.2]{DMR}. 
This notion makes the construction of compactifications of Deligne--Lusztig varieties more straightforward and uniform. 
For example, setting $r=1$ yields the Zariski closure $X(\underline{w_1})=\overline{X(w_1)}$ of $X(w_1)$ in $G/B$. Setting $w_1,\ldots , w_r\in S$ such that $w_1\cdots w_r\in W$ is a reduced expression yields the smooth compactification  $X(\underline{w_1},\ldots, \underline{w_r})=\overline{X}(w_1\cdots w_r)$ defined by Deligne and Lusztig.

In this generality, we give an irreducibility criterion for  compactified Deligne--Lusztig varieties  $X(\underline{w_1},\ldots, \underline{w_r})$, generalising a result of Bonnaf\'{e} and Rouquier \cite{BonnafeRouquierOntheirred}. 
We also show that they have $F_p$-rational singularities (\Cref{Fpregularity}). %and thus are normal, Cohen--Macaulay, and pseudo-rational. 
%Furthermore, we provide a list of  compactifications $X(\underline{w_1},\ldots, \underline{w_r})$  of Deligne--Lusztig varieties which  are smooth. 

 The structure of the paper is as follows. 
 We fix our notations for Deligne--Lusztig varieties and compactified Deligne--Lusztig varieties and describe their geometric properties  in \Cref{secconnectednessandsmoothnesscriterions}. In  \Cref{secbiratandbraid}, we show that $X(\underline{w_1},\ldots, \underline{w_r})$ is smooth when each $w_i, i=1,\ldots, r,$ comes from a braid relation in the Weyl group $W$, and we establish relations between cohomology groups for the structure sheaf and the canonical sheaf on the associated compactified Deligne--Lusztig varieties.

\Cref{secP1fibandtwistedconj} contains the constructions of morphisms which are $\mathbb{P}^1$-fibrations between certain compactified Deligne--Lusztig varieties. They are associated with   $F$-cyclic conjugations in the Weyl group. In Proposition \ref{propcohwrtP1fibration}, we relate the cohomology of the structure sheaf  (resp. canonical sheaf) between these compactified Deligne--Lusztig varieties.
Finally, we prove \Cref{mainthm} in \Cref{seccohofcompactified}.

\section{Compactified Deligne--Lusztig varieties}\label{secconnectednessandsmoothnesscriterions}
We begin with fixing notations related to Deligne--Lusztig varieties associated to a tuple of Weyl group elements. This construction has been treated in \cite[\textsection 9]{DL} and \cite[\textsection 2]{DMR}. We review  the definitions and properties that are relevant for our context. 
Moreover, by adapting classical proofs, we give criterions for compactified Deligne--Lusztig varieties to be irreducible and show that they are $F_p$-rational. 

\subsection{Definitions and basic properties}\label{2.1definition}

Let $G$, $F:G\to G$, $T\subset B$, $S\subset W$ be as in \Cref{secbackground}. 
As  above, we take a suitable pair of $F$-stable $T\subset B$ such that 
$F(S)=S$ \cite[\textsection 4.2]{DigneMichelBook}.
In the literature, the endomorphism $F$ is referred to as a Frobenius root or a Steinberg endomorphism.
Denote by $\ell:W\to \mathbb{N}\cup \{0\}$ the Bruhat length and $\leq$ the Bruhat order on $W$.

Recall that by the Bruhat decomposition, the diagonal $G$-action on the product $G/B\times G/B$ yields a bijection between the Weyl group $W$ and $G\backslash\left(G/B\times G/B\right)$. The orbit corresponding to $w\in W$ is denoted by $\Orb{w}$. It can be regarded as a fibre bundle over the flag variety $G/B$ with fibres isomorphic to the Schubert cell $BwB/B$. 

Similarly,  for a given $r$-tuple $(w_1,\ldots, w_r) \in W^r$, one can construct an iterated fibre bundle over $G/B$ by taking fibre products of the orbits $O(w_i)$, $i=1,\ldots, r$, which we denote by
\begin{equation*}
		O(w_1,\ldots,w_r):=O(w_1)\times_{G/B}\cdots \times_{G/B}O(w_r).
	\end{equation*}

Moreover, we denote the fibre product of the Zariski closures $\overline{O(w_i)}\subseteq G/B\times G/B$ of the orbits by
\begin{equation*}
		O(\underline{w_1},\ldots, \underline{w_r}):=\overline{O(w_1)}\times_{G/B}\cdots\times_{G/B}\overline{O(w_r)}.
	\end{equation*}
	 	 We refer to them as the \emph{compactified orbit  associated to $(w_1,\ldots, w_r)$} in the rest of the article. 
The algebraic group 
$G$ acts diagonally on $O(w_1,\ldots,w_r)$ and $O(\underline{w_1},\ldots, \underline{w_r})$. 
	
	Since $O(\underline{w_1},\ldots, \underline{w_r})$ is an iterated fibre bundle over $G/B$ with fibres isomorphic to Schubert varieties $\overline{Bw_iB}/B$, cf. \cite[F.23]{Jantzen}, it is an integral, separated, normal, Cohen--Macaulay $k$-scheme locally of finite type. A proof of normality can be found in \cite[Lemma 2.2.13]{DMR}.  
	The Cohen--Macaulay property behaves well under base change \cite[Lemma 045T]{stacks-project}, so $O(\underline{w_1},\ldots, \underline{w_r})$ is Cohen--Macaulay follows from the classical result that Schubert varieties are Cohen--Macaulay (see the introduction of \cite{Hashimoto}).

Denote by $F: G/B\to G/B$ the map induced by the endomorphism $F:G\to G$ and $\Gamma_F\subseteq G/B\times G/B$ its graph. Since $F^d$ is a Frobenius endomorphism, we know that $F$ is surjective, finite (\cite[Proposition 8.1.2]{DigneMichelBook}), separated (\cite[Proposition 9.13]{GoerWedhAlgGeom1}), and purely inseparable (\cite[(3.7.6)]{EGA1}). Moreover, the induced map $F:W\to W$ is a group automorphism  \cite[Lemma 4.2.21]{DigneMichelBook}. 

\begin{definition}\label{dfncompactifieddlvar}
Let $w_1,\ldots, w_r\in W$. We define the \emph{Deligne--Lusztig variety associated to $(w_1,\ldots, w_r)$} as
the fibre product
\begin{equation*}
	X(w_1,\ldots ,w_r):= O(w_1,\ldots ,w_r)\times_{G/B\times G/B}\Gamma_F
\end{equation*}
with respect to the projection map $\proj_{0,r}:O(w_1,\ldots ,w_r)\to G/B\times G/B$ and the inclusion map $\Gamma_F\hookrightarrow G/B\times G/B$. 
Similarly, we define the
	\emph{compactified Deligne--Lusztig variety associated to  $(w_1,\ldots, w_r)$} as
	\begin{equation*}
		X(\underline{w_1},\ldots, \underline{w_r}):=O(\underline{w_1},\ldots, \underline{w_r})\times_{G/B\times G/B}\Gamma_F.
	\end{equation*}
	\end{definition}

Deligne--Lusztig varieties $X(w_1,\ldots,w_r)$ and compactified Deligne--Lusztig varieties $X(\underline{w_1},\ldots, \underline{w_r})$ are separated $k$-schemes locally of finite type. 
The diagonal $G^F$-action on $O(w_1,\ldots,w_r)$ (resp. $O(\underline{w_1},\ldots, \underline{w_r})$) induces a diagonal $G^F$-action on $X(w_1,\ldots,w_r)$ (resp. $X(\underline{w_1},\ldots, \underline{w_r})$). 

For any $w_1,\ldots, w_r\in W$, the $k$-scheme $X(w_1,\ldots,w_r)$ is smooth of dimension $\ell(w_1)+\cdots +\ell(w_r)$. If for all $ i=1,\ldots, r$, the associated Schubert variety $\overline{{B}w_i{B}}/{B}$ is smooth, then $X(\underline{w_1},\ldots, \underline{w_r})$ is smooth of dimension $\ell(w_1)+\cdots +\ell(w_r)$, cf.  \cite[Proposition 2.3.5, 2.3.6 (i)]{DMR}. In general, compactified Deligne--Lusztig varieties have $F_p$-rational singularities, as we will see in Proposition \ref{propstronglyFregular}.

Whenever $(w_1',\ldots, w_{r}')$ is a tuple satisfying $w_i'\leq w_i$ for all $i$, 
there is a $G^F$-equivariant embedding $X(w_1',\ldots, w_{r}')\hookrightarrow X(\underline{w_1},\ldots, \underline{w_r})$.
Moreover, we have a stratification 
\begin{equation*}
	X(\underline{w_1},\ldots, \underline{w_r})=\bigcup_{w_i'\leq w_i, \text{ for all }i}X(w_1',\ldots w_r').
\end{equation*}

When $s_i\in S$ for all $i=1,\ldots, r$, the tuple $(s_1,\ldots, s_r)$ can be regarded as an element in the free monoid generated by $S$, which we denote by $S^*$.  
For any $s\in S$, the Zariski closure  $\overline{BsB}=B\langle s \rangle B$
is the parabolic subgroup corresponding to the subgroup $\langle s \rangle \subset W$, cf. \cite[Corollary 2.2.10]{DMR}, so  the compactified Deligne--Lusztig variety $X(\underline{s_1},\ldots, \underline{s_r})$ is always smooth. To simplify the notations, given any $w=s_1\cdots s_m\in  S^* $ with $s_i\in S, i=1,\ldots m$, we denote $X(s_1,\ldots,s_m)$ by $X^{S^*}(w)$ and 
$X(\underline{s_1},\ldots, \underline{s_m})$ by $\overline{X}^{S^*}(w)$. The notations $O^{S^*}(w)$ and 
 $\overline{O}^{S^*}(w)$ for the orbits are defined analogously. 
 
 If $s_1\cdots s_r\in S^*$ is a reduced expression of an element in $w\in W$, then $\overline{X}^{S^*}(s_1\cdots s_r)$ recovers the smooth compactification $\overline{X}(w)$ as defined in \cite[\textsection 9.10]{DL}. 
Furthermore, denote by $\alpha: S^* \to W$  be the natural projection morphism of monoids. Let  $\preceq$ be the Bruhat order in $S^*$ and let $\ell^{S^*}:S^*\to \mathbb{N}\cup\{0\}$ be the Bruhat length.

\subsection{The Lang map}\label{dfnpartialfrob}
We  consider an alternative description of Deligne--Lusztig varieties via the Lang map. 
Using this description, we show that $G^F$ acts transitively on the set of irreducible components of  $X(w_1,\ldots,w_r)$  and $X(\underline{w_1},\ldots, \underline{w_r})$.   Denote by $G^r$ the $r$-fold product $G\times\cdots \times G$ over $\Spec   \ k$. This is again a $k$-group scheme.

We begin with fixing notations. 
Let  $F_1:G^r\to G^r$ be the endomorphism  given by
	\begin{equation*}
		(g_0,\ldots,g_{r-1})\longmapsto (g_1,\ldots, g_{r-1}, F(g_0)).
	\end{equation*} 
	Note that $F_1$ can be viewed as a Frobenius root for the algebraic group $G^r$. 
	
		The \emph{Lang map $\mathcal{L}_1$ associated to $F_1$} is the endomorphism $x\mapsto x^{-1}F_1(x)$ of $G^r$. In other words, we have
	\begin{equation*}
		\mathcal{L}_1:(g_0,\ldots,g_{r-1})\longmapsto (g_0^{-1}g_1,\ldots, g_{r-2}^{-1}g_{r-1}, g_{r-1}^{-1}F(g_0)).
	\end{equation*}

\begin{lemma}\label{lemequidimensionalityofcompactifieddlvar}
	 For any $w_1,\ldots,w_r\in W$, there are $G^F$-equivariant isomorphisms
	 	\begin{equation*}
		X(w_1,\ldots ,w_r)\overset{\sim}{\longrightarrow}\mathcal{L}_1^{-1}\left({B}w_1{B} \times_{\Spec \  k} \cdots \times_{\Spec   \  k} {B}w_r{B}\right)/({B})^r
	\end{equation*}
	and 
	 	\begin{equation*}
		X(\underline{w_1},\ldots, \underline{w_r})\overset{\sim}{\longrightarrow}\mathcal{L}_1^{-1}\left(\overline{{B}w_1{B}}\times_{\Spec  \  k} \cdots \times_{\Spec  \  k} \overline{{B}w_r{B}}\right)/({B})^r.
	\end{equation*}
	Moreover, $G^F$ acts transitively on the irreducible components of  $X(w_1,\ldots ,w_r)$ (resp. $X(\underline{w_1},\ldots, \underline{w_r})$). In particular, $X(w_1,\ldots ,w_r)$ and $X(\underline{w_1},\ldots, \underline{w_r})$ are equidimensional.
\end{lemma}
\begin{proof}
We record the proof for $X(\underline{w_1},\ldots, \underline{w_r})$ as the proof for $X(w_1,\ldots ,w_r)$ is the same. In this proof,
we denote $\mathcal{B}:=\overline{{B}w_1{B}}\times_{\Spec \  k} \cdots \times_{\Spec   \  k} \overline{{B}w_r{B}}$.

By Lang's theorem as formulated in \cite[Theorem 7.1]{CabanEngue}, we deduce that $\mathcal{L}_1:G^r\to G^r$ is surjective and is a Galois cover with Galois group $G^F$. 
The subscheme $\mathcal{L}_1^{-1}\left(\mathcal{B}\right)$ of $G^r$ is stable under the free $({B})^r$-action from the right, so we may take the quotient of $\mathcal{L}_1^{-1}\left(\mathcal{B}\right)$ with respect to the induced free right $({B})^r$-action. It follows that $\mathcal{L}_1^{-1}\left(\mathcal{B}\right)/({B})^r$ is a closed subscheme of $\left(G/B\right)^r$. Observe that $X(\underline{w_1},\ldots, \underline{w_r})$  is also isomorphic to a closed subscheme of $\left(G/B\right)^r$ via  projection to the first $r$-entries. The isomorphisms  in this lemma then follows from verifying the definitions, cf. 
 \cite[\textsection 1.11]{DL}. 
 
Moreover, 	
\begin{equation*}
		G^F\backslash \mathcal{L}_1^{-1}(\mathcal{B})\overset{\sim}{\longrightarrow}\mathcal{B}
	\end{equation*}
	is irreducible. Since the quotient map $\pi: \mathcal{L}_1^{-1}\left(\mathcal{B}\right)\to \mathcal{L}_1^{-1}\left(\mathcal{B}\right)/({B})^r$ is equivariant under the left $G^F$-action and surjective, it induces a surjective morphism $G^F\backslash \mathcal{L}_1^{-1}(\mathcal{B}) \to G^F\backslash\mathcal{L}_1^{-1}\left(\mathcal{B}\right)/({B})^r$. Thus 
	 $G^F\backslash\mathcal{L}_1^{-1}\left(\mathcal{B}\right)/({B})^r$ is irreducible and 
	  $G^F$ acts transitively on the set of irreducible components of $\mathcal{L}_1^{-1}\left(\mathcal{B}\right)/({B})^r$. 	

	\end{proof}

\begin{remark}
	The $G^F$-action on $X(\underline{w_1},\ldots, \underline{w_r})$ is not free. In $G/B$,  the point ${B}$ already has nontrivial stabiliser $B^F=B(\mathbb{F}_q)$.  The point $(B,\ldots, B)$ in $X(\underline{w_1},\ldots, \underline{w_r})$ is also stabilised by  $B^F=B(\mathbb{F}_q)$.
\end{remark}

\subsection{Criterion for irreducibility}
There is a simple criterion for the irreducibility of classical Deligne--Lusztig varieties, proved by Lusztig, Bonnaf\'{e}--Rouquier \cite{BonnafeRouquierOntheirred}, G\"{o}rtz \cite{GoertzOntheconn}, and many others.  
In this section we show that $X(w_1,\ldots,w_r)$ (resp. $X(\underline{w_1},\ldots, \underline{w_r})$) is irreducible when the tuple $(w_1,\ldots, w_r)$ has full $F$-support by modifying the proof of \cite{BonnafeRouquierOntheirred}. 

To begin, we fix the notations for the set of $F$-orbits in $S$ and the $F$-support for elements in $W$ and $S^*$. 

\begin{definition}\label{ForbitandFsupport}
 The \emph{$F$-orbit}  of $w\in W$ is the set $\OrbF{w}:=\{F^n(w)\vert  n\in \mathbb{Z}_{\geq 0}\}$. We define the set
\begin{equation*}
	S_F:=\{\OrbF{s} \vert  s\in S\}
\end{equation*}
	of $F$-orbits in $S$. 	
	For $w\in W$,  the \emph{$F$-support} of $w$ is the set
	\begin{equation*}
		{\supp}_F(w):=\{\tilde{s}\in S_F  \vert   \text{there exists }t\in \tilde{s}\text{ such that }t\leq w\}.
	\end{equation*}
	An element $w\in W$ is called \emph{of full $F$-support} if ${\supp}_F(w)=S_F$.
	
	 For a tuple $(w_1,\ldots w_r)\in W^r$, we denote
 \begin{equation*}
 	{\supp}_F(w_1,\ldots, w_r):={\supp}_F(w_1)\cup\cdots \cup {\supp}_F(w_r).
 \end{equation*}
 Similarly, the $F$-support for any $v\in S^*$ with $v=s_1\cdots s_m$ is defined as 
 	\begin{equation*}
		{\supp}_F(v):={\supp}_F(s_1,\ldots, s_m).
	\end{equation*}
\end{definition}
\begin{remark}\label{conventionforcoxeter}
	We say that $w\in W$ is a \emph{Coxeter element} when $w=s_1\cdots s_r$ is of full $F$-support and each $s_i\in S$ is from a distinct $F$-orbit.
\end{remark}

\begin{proposition}\label{propconnectednessandnumberofirreduciblecomponents}
Let $w_1,\ldots, w_r\in W$ and denote $I={\supp}_F(w_1,\ldots, w_r)$. 
The Deligne--Lusztig varieties $X(w_1,\ldots,w_r)$ and $X(\underline{w_1},\ldots, \underline{w_r})$ 
are irreducible if and only if $I=S_F$.  In general, they  have $\vert G^F/P_I^F\vert $ irreducible  components, where $W_I$ is the $F$-stable parabolic subgroup of $W$ generated by $\{s\in W \vert  \OrbF{s}\in I\}$ and 
$P_I={B}W_I{B}$.
\end{proposition}
\begin{proof} 
For certain reduced expressions of $w_1,\ldots, w_r$, take the product $w_1\cdots w_r$ in $S^*$ and denote it by $v$.
This product depends on the choices of the reduced expressions of $w_1,\ldots, w_r$, but  for each such $v$, there is an isomorphism 
 \begin{equation*}
 X^{S^*}(v)	\overset{\sim}{\longrightarrow}X(w_1,\ldots, w_r)
 \end{equation*}
 via projection, cf. \cite[\textsection 2.3.1]{DMR}. 
It also follows that $X(w_1,\ldots, w_r)$ and $X(\underline{w_1},\ldots, \underline{w_r})$ are  dense open subschemes of $\XF{v}$. 
 Thus it suffices to prove that 
 $\XF{v}$ is irreducible if and only if $v$ has full $F$-support. %GW 1.17
 
The proof showing that $\XF{v}$ is irreducible is analogous to the proof of \cite{BonnafeRouquierOntheirred} for classical Deligne--Lusztig varieties. 
 Suppose  $I=S_F$ and thus ${\supp}_F(v)=S_F$. Let $Z$ be the connected component of $\XF{v}$ which contains  the point $({B},\ldots, {B})$. Denote the stabiliser of $Z$ in $G^F$ by ${\rm Stab}(Z)$. Fix $v'$ to be of  minimal length such that $v'\preceq v$ and ${\supp}_F(v')=S_F$. In particular, $v'$ is a reduced expression of
$\alpha(v')\in W$, which is a Coxeter element. This implies that $\XF{v'}$ is irreducible \cite[Proposition 4.8]{LusztigCoxeterOrbits}.
 
 On the other hand, there is a $G^F$-equivariant embedding $\XF{v'}\to \XF{v}$. Let $Z'$ be the connected component of $\XF{v'}$ containing $({B},\ldots, {B})$. 
 Then ${\rm Stab}(Z')\subseteq {\rm Stab}(Z)$. However,  $\XF{v'}$ is irreducible, so ${\rm Stab}(Z')=G^F$. Thus $\XF{v}$ is irreducible. In this case,  its dense open subschemes $X(w_1,\ldots, w_r)$ and $X(\underline{w_1},\ldots, \underline{w_r})$ are  also irreducible \cite[Proposition 3.24]{GoerWedhAlgGeom1}. 
 
   Suppose  ${\supp}_F(w_1,\ldots, w_r)=I$ and so  ${\supp}_F(v)=I$. By \cite[Proposition 2.3.8]{DMR}, we know that there is a $G^F$-equivariant isomorphism
   \begin{equation*}
   	\XF{v}\overset{\sim}{\longrightarrow}G^F/U_I^F\times^{L_I^F}\overline{X}^{S^*}_{L_I}(v),
   \end{equation*}
   where $U_I$ is the unipotent radical of $P_I$ and $L_I$ is the Levi subgroup such that $P_I=U_I\rtimes L_I$. Recall that $L_I$ is reductive, and $\overline{X}^{S^*}_{L_I}(v)$ is the compactified Deligne--Lusztig variety for $L_I$ corresponding to $v$. Consider the $G^F$-equivariant quotient morphism
   \begin{equation*}
   	G^F/U_I^F\times^{L_I^F}\overline{X}^{S^*}_{L_I}(v)\longrightarrow G^F/P_I^F,
   \end{equation*}
   whose fibres are isomorphic to $\overline{X}^{S^*}_{L_I}(v)$. In this case, we know that  $v$ has full $F$-support with respect to the Weyl group of $L_I$, so $\XF{v}$ has  $\vert G^F/P_I^F\vert $ irreducible components.
 	\end{proof}

\subsection{$F_p$-regularity}\label{Fpregularity}

We denote by $F_p$ the (absolute) $p$-Frobenius to distinguish our notations from the notations coming from the theory of $F_p$-singularities in commutative algebra and algebraic geometry. 

In \cite[Proposition A.6]{Wang22}, we showed that some compactified Deligne--Lusztig varieties are strongly $F_p$-regular. This property generalises to arbitrary compactified Deligne--Lusztig varieties. As in loc. cit., we use a similar strategy as in the proof of \cite[Lemma 4.3]{Lusztigunipotentch}, which 
uses faithfully flat descent of the smoothness property along the Lang map.

\begin{proposition}\label{propstronglyFregular}
For $w_1,\ldots, w_r\in W$, the compactified Deligne--Lusztig variety $X(\underline{w_1},\ldots, \underline{w_r})$ is strongly $F_p$-regular. In particular, $X(\underline{w_1},\ldots, \underline{w_r})$ is $F_p$-rational and thus normal and Cohen-Macaulay. 

If $\overline{Bw_iB}$ is smooth for all $i$, then $X(\underline{w_1},\ldots, \underline{w_r})$ is smooth. 
\end{proposition}
\begin{proof}
When $\overline{Bw_iB}$ is smooth for all $i$, this is proved by faithfully flat descent of smoothness in \cite[Lemma 4.3]{Lusztigunipotentch} and \cite[Proposition 2.3.5]{DMR}, cf.  \cite[Proposition 14.59]{GoerWedhAlgGeom1}. Analogously, we use faithfully flat descent of strongly $F_p$-regularity to show that any compactified Deligne--Lusztig variety is strongly $F_p$-regular.

	For each $i=1,\ldots, r$, we know that $\overline{{B}w_i{B}}$ is strongly $F_p$-regular \cite[Cor. 4.1, p. 958]{BrTh} and thus Cohen-Macaulay \cite[Cor. 4.2, p. 959]{BrTh}. The product
	\begin{equation*}
		\overline{{B}w_1{B}}\times_{\Spec \    k} \cdots \times_{\Spec \  k} \overline{{B}w_r{B}}
	\end{equation*}
	is strongly $F_p$-regular by faithfully flat descent along the projection map ${\rm pr}_i: \overline{{B}w_1{B}}\times_{\Spec \    k} \cdots \times_{\Spec \  k} \overline{{B}w_r{B}}\to \overline{{B}w_i{B}}$ \cite[Theorem 5.5.b]{HoHu}.
	 Recall that the Lang map $\mathcal{L}_1$ is \'{e}tale, so its closed fibres are smooth and thus strongly $F_p$-regular, cf. \cite[Ch 3, Remark 1.6, Theorem 4.4(a)]{KSmith}. We may now apply \cite[Theorem 3.6]{Aber} and conclude that 
	\begin{equation*}
		\mathcal{L}_1^{-1}\left(\overline{{B}w_1{B}}\times_{\Spec \  k} \cdots \times_{\Spec   \  k} \overline{{B}w_r{B}}\right)
	\end{equation*}
	is strongly $F_p$-regular. Finally, by Lemma \ref{lemequidimensionalityofcompactifieddlvar} and the faithfully flat descent of strong $F_p$-regularity \cite[Theorem 5.5]{HoHu}, $X(\underline{w_1},\ldots, \underline{w_r})$ is strongly $F_p$-regular. 
		
	Strongly $F_p$-regular rings are $F_p$-rational, so  $X(\underline{w_1},\ldots, \underline{w_r})$  is normal and Cohen-Macaulay by \cite[Theorem 4.2, 6.27]{HoHu}.
			\end{proof}

\begin{remark}
	For $w\in W$, the Zariski closure $\overline{X(w)}$ in $G/B$ is not always smooth, but for 
	any $v\in S^*$ that gives a reduced expression of $w$, the projection map $\XF{v}\to \overline{X(w)}$ is a proper birational morphism \cite[Lemma 9.11]{DL}.

\end{remark}

\section{Braid relations in the Weyl group and birational morphisms}\label{secbiratandbraid}
\subsection{Smoothness via braid relations}
We  consider some specific elements in the Weyl group for which the compactified Deligne--Lusztig variety is smooth. To begin, we recall the relations in the Weyl group of finite reductive groups, cf. \cite{GeckPfeiffer}. 

All the relations in the group presentation of the Weyl group as a finite Coxeter group $(W,S)$ can be written as $(st)^{m_{st}}=1,  s,t\in S,$
where $m_{st}$ is a positive integer such that $m_{ss}=1$ and $m_{st}\geq 2 $ if $s\neq t$. 

When  $(st)^{m}=1$, with $ \ s,t\in S$ and  $s\neq t$ is a relation in $W$, we define 
\begin{equation*}
		P(s,t):=st\cdots \in S^*,
	\end{equation*}
	such that $\ellm{P(s,t)}=m$. When there is no ambiguity, we also use $P(s,t)$ to denote its canonical projection $\alpha(P(s,t))\in W$. Moreover, the equation $(st)^{m}=1$ can be rewritten as 
\begin{equation}\label{grouppresrel}
	P(s,t)=P(t,s)
\end{equation}
with $\ellm{P(s,t)}=\ellm{P(t,s)}=m$. We call (\ref{grouppresrel}) 
a \emph{braid relation} for $W$ following the convention of \cite[\textsection 4]{GeckPfeiffer}.

\begin{lemma}\label{grouppresrelimplysmoothness}
	Let $w_1,\ldots, w_r\in W$. Suppose for each $j\in \{1,\ldots, r\}$, there is a braid relation (\ref{grouppresrel}) of $W$  such that $w_j=P(s,t)=P(t,s)$.
	 Then the compactified Deligne--Lusztig variety $X(\underline{w_1},\ldots, \underline{w_r})$ is smooth. 
\end{lemma}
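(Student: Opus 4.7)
The plan is to apply the smoothness criterion established in \Cref{criterionforcompactifieddlvarbeingsmooth}, which reduces the claim to showing that for each $i$, the Schubert variety $\overline{Bw_iB}/B \subseteq G/B$ is smooth. Thus the whole problem collapses to a single statement about $w = P_m(s,t)$ for one braid relation $(st)^m = 1$ at a time.

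The key combinatorial observation is that $P_m(s,t)$ is the longest element of the rank-two standard parabolic subgroup $W_{\{s,t\}} \subseteq W$. Since $s \neq t$, the subgroup $W_{\{s,t\}}$ is a dihedral group of order $2m$, whose longest element has length exactly $m$; the word $P_m(s,t) = st\cdots \in S^*$ of length $m$ therefore projects under $\alpha$ to this longest element. Hence $\overline{Bw_iB}/B$ is the Schubert variety associated to the longest element $w_0^I$ of $W_I$ with $I = \{s,t\}$.

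To finish, I would invoke the classical identification $\overline{B w_0^I B}/B = P_I/B$, where $P_I = B W_I B$ is the standard parabolic attached to $I$. Using the Levi decomposition $P_I = L_I \cdot U_I$ with $U_I \subseteq B$, one obtains $P_I/B \cong L_I / (L_I \cap B)$, which is the full flag variety of the connected reductive group $L_I$, and hence smooth and projective. Applying this with $I = \{s,t\}$ shows that each $\overline{Bw_iB}/B$ is smooth, and \Cref{criterionforcompactifieddlvarbeingsmooth} then yields the smoothness of $X(\underline{w_1}, \ldots, \underline{w_r})$.

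I do not anticipate any substantial obstacle: once \Cref{criterionforcompactifieddlvarbeingsmooth} is in hand, the lemma reduces to a standard fact about Schubert varieties of longest parabolic elements. The only point requiring care is the elementary verification that $P_m(s,t)$ is the longest element of $W_{\{s,t\}}$, which follows directly from the braid relation $(st)^m = 1$ combined with $\ellm{P_m(s,t)} = m$.
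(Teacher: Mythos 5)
Your proposal is correct and follows essentially the same route as the paper: reduce via \Cref{criterionforcompactifieddlvarbeingsmooth} to the smoothness of each Schubert variety, observe that $P_m(s,t)$ projects to the longest element of the rank-two parabolic subgroup $W_{\{s,t\}}$, so that $\overline{Bw_iB}/B=P_I/B\cong L_I/(L_I\cap B)$ is the flag variety of the reductive Levi and hence smooth. The only cosmetic difference is that the paper checks the longest-element claim case by case using the classification ($m\in\{2,3,4,6\}$), whereas you argue uniformly from the dihedral structure of $W_{\{s,t\}}$; both verifications are valid.
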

\begin{proof}
By Proposition \ref{propstronglyFregular}, it suffices to show that for all $j=1,\ldots, r$, the associated scheme $\overline{{B}w_j{B}}$ is smooth. 

Consider $w_j=P(s,t)=P(t,s)$ given by $(st)^m=1$. Denote $J=\{s,t\}\subseteq S$.
The the corresponding subgroup $W_J\subseteq W$  generated by $J$ has a group presentation
 \begin{equation*}
 W_J=\langle s, t \vert (st)^{m}=1, s^2=1, t^2=1 \rangle.	
 \end{equation*}
 As a consequence, $P(s,t)$ is a reduced expression of the longest  element of $W_J$. 
 
 Let $P_J=BW_JB\subseteq G$ be the parabolic subgroup corresponding to $W_J$.
 Since $w_j$ is the longest element in $W_J$, we know that $\overline{{B}w_j{B}} = P_J$ and is smooth cf. \cite[Proposition 3.2.8]{DigneMichelBook}.
 \end{proof}
\subsection{Higher direct images of the structure sheaf for birational morphisms}

Recall that the braid monoid $B^+=B^+(W,S)$ associated to $W$ is a monoid generated by $S$ with respect to the  braid relations (\ref{grouppresrel}) for $W$, cf. \cite[\textsection 4]{GeckPfeiffer}. There is a natural surjective morphism of monoids
$\beta:S^*\twoheadrightarrow B^+$. Note that the projection map $\alpha: S^* \to W$ from \textsection \ref{secconnectednessandsmoothnesscriterions} factors through $\beta$.

\begin{proposition}\label{propcohwrtbraidrelations}
Let $w,v\in S^*$ such that $\beta(w)=\beta(v)$. Then for all $i\geq 0$,  there are $G^F$-equivariant isomorphisms
	\begin{equation*}
 	H^i\left(\XF{w},\Osh_{\XF{w}}\right)\overset{\sim}{\longrightarrow}H^i\left(\XF{v},\Osh_{\XF{v}} \right)
 \end{equation*}
 and 
 \begin{equation*}
 	H^i\left(\XF{w},\omega_{\XF{w}}\right)\overset{\sim}{\longrightarrow}H^i\left(\XF{v},\omega_{\XF{v}}\right).
 \end{equation*}	
\end{proposition}
\begin{proof}
	We can rewrite $w=w_1\cdots w_r$ in $S^*$ such that
	$\ellm{w_j}=\ell(\alpha(w_j))$ and $\overline{{B}\alpha(w_j){B}}$  is smooth for all $j=1,\ldots, r$. Suppose for an $l\in \{1,\ldots, r\}$,  we have 
	$w_l=P(s,t)$ given by 
	 a braid relation $P(s,t)=P(t,s)$ of $W$. Define the element $w':=w_1\cdots P(t,s)\cdots w_r$ in $S^*$ by replacing $w_l$ by $P(t,s)$. We show below that $\XF{w}$, $\XF{w'}$, and $X(\underline{\alpha(w_1)},\ldots,\underline{\alpha(w_r}))$ have the same cohomology for the structure sheaf and canonical sheaf. Since $\beta(w)=\beta(v)$, the statement of the proposition follows from applying this procedure finitiely many times.

By assumption, we have $\alpha(w_l)=P(s,t)=P(t,s)$ in $W$, so replacing $w_l$ by $P(t,s)$ does not affect the construction of the following compactified Deligne--Lusztig variety
	\begin{equation*}
		X(\underline{\alpha(w_1)},\ldots,\underline{\alpha(w_r}))=X(\underline{\alpha(w_1)},\ldots,\underline{P(t,s)},\ldots,\underline{\alpha(w_r})).
	\end{equation*} 
	For $X_n=\XF{w}$, $\XF{w'}$,  $X(\underline{\alpha(w_1)},\ldots,\underline{\alpha(w_r}))$, it follows from the assumptions   and Lemma \ref{grouppresrelimplysmoothness} that $X_n$ is  smooth and projective. 
Note that we have $I:=\supp_F(w)=\supp_F(w')=\supp_F(\alpha(w_1),\ldots,\alpha(w_r))$, so 
 $X_n$ has $|G^F/P_I^F|$ irreducible components by Proposition \ref{propconnectednessandnumberofirreduciblecomponents},. 	

	There are natural projection maps ${\rm pr}_{0,\ell(w_j)}:\overline{O}^{S^*}(w_j)\to O(\underline{w_j})$  for all $j=1,\ldots r$ and ${\rm pr}_{0,\ell(w_l)}:\overline{O}^{S^*}(P(t,s))\to O(\underline{\alpha(w_l)})$.
	Similarly, the projection to the $0$-th, $\ell(w_1)$-th, $\ell(w_1)+\ell(w_2)$-th,\ .\ .\ ., and $\ell(w_1)+\cdots+\ell(w_{r})$-th coordinate yields the vertical maps in the cartesian diagrams
		\begin{equation*}
\begin{tikzcd}
	\XF{w} \arrow[r]\arrow[d,"f"'] &\overline{O}^{S^*}(w)\arrow[d]\\
	X(\underline{\alpha(w_1)},\ldots,\underline{\alpha(w_r})) \arrow[r, hook] & O(\underline{\alpha(w_1)},\ldots,\underline{\alpha(w_r})),
\end{tikzcd}
\end{equation*}
and
	\begin{equation*}
\begin{tikzcd}
	\XF{w'} \arrow[r]\arrow[d,"f'"'] &\overline{O}^{S^*}(w')\arrow[d]\\
	X(\underline{\alpha(w_1)},\ldots,\underline{\alpha(w_r})) \arrow[r, hook] & O(\underline{\alpha(w_1)},\ldots,\underline{\alpha(w_r})).
\end{tikzcd}
\end{equation*}
	The maps $f$ and $f'$ are $G^F$-equivariant proper birational morphisms. 
	
	It follows from the proof of Zariski's main theorem (\cite[Corollary 11.4]{Hartshorne}) that $\phi_*\Osh_{X_1}\cong \Osh_{X_2}$,  for  $\phi=f$ (resp. $\phi=f'$), $X_1=\XF{w}$ (resp. $X_1=\XF{w'}$) and $X_2=X(\underline{\alpha(w_1)},\ldots,\underline{\alpha(w_r}))$. 
	Moreover, applying \cite[Corollary 3.2.10]{CR11} 	yields
	$R\phi_*\Osh_{X_1}\cong \Osh_{X_2}$. Hence the Leray spectral sequence associated to $\phi$ degenerates. 
	
	Since $\phi$ is $G^F$-equivariant, the isomorphism $\phi_*\Osh_{X_1}\cong \Osh_{X_2}$ is $G^F$-equivariant, 
	thus functoriality of the associated Leray spectral sequence yields $G^F$-equivariant isomorphisms on cohomology groups
		\begin{equation*}
 	H^i\left(X_1,\Osh_{X_1}\right)\overset{\sim}{\longrightarrow}H^i\left(X_2,\Osh_{X_2} \right),
 \end{equation*}
	for  all $i\geq 0$.
	The $G^F$-equivariant isomorphisms for the cohomology of the canonical sheaf follow from an equivariant version of Serre duality \cite[Corollary 2.10]{Nironi}.
	\end{proof}

\section{$\mathbb{P}^1$-fibrations and $F$-conjugacy classes}\label{secP1fibandtwistedconj}
We construct $\mathbb{P}^1$-fibrations between compactified Deligne--Lusztig varieties and relate their cohomology for the structure sheaf and the canonical sheaf. 
Moreover, we use these $\mathbb{P}^1$-fibrations to show that pairs of compactified Deligne--Lusztig varieties of the form $\XF{sv}$ and $\XF{vF(s)}$ have isomorphic cohomology groups for the the structure sheaf and the canonical sheaf.

\subsection{$\mathbb{P}^1$-fibrations over Deligne--Lusztig varieties}
\begin{lemma}\label{lemswFstowFsP1}
	Let  $w_1,\ldots, w_r, w_1',\ldots, w_m'\in W$ and $s\in S$. The projection map  
	\begin{equation*}
		\phi: X(\underline{w_1},\ldots, \underline{w_{r}},\underline{s}, \underline{s}, \underline{w_{1}'},\ldots,\underline{w_{m}'})\longrightarrow X(\underline{w_1},\ldots, \underline{w_{r}},\underline{s}, \underline{w_{1}'},\ldots,\underline{w_{m}'})
	\end{equation*}
	given by $(B_0,\ldots, B_{r}, B_{r+1}, B_{r+2}, \ldots, B_{r+m+1}, FB_0)\mapsto (B_0,\ldots, B_{r}, B_{r+2}, \ldots, B_{r+m+1}, FB_0)$ is a $G^F$-equivariant Zariski $\mathbb{P}^1$-bundle. 
	
	The $G^F$-equivariant morphism of $k$-schemes
	\begin{equation*}
		\pi: X(\underline{s}, \underline{w_1},\ldots, \underline{w_r},\underline{F(s)})\longrightarrow X(\underline{w_1},\ldots, \underline{w_r},\underline{F(s)}),
	\end{equation*} 
	given by $(B_0,\ldots, B_{r+1}, FB_0)\mapsto (B_1,\ldots, B_{r+1}, FB_1)$, 
	 is a Zariski $\mathbb{P}^1$-bundle. 
	\end{lemma}
\begin{proof}
	For any $t\in S$, the projection map $ O(\underline{t},\underline{t})\to O(\underline{t})$ given by $(B_0, B_1,B_2)\mapsto (B_0, B_2)$ is a $\mathbb{P}^1$-bundle. Recall that it is a property of the Bruhat decomposition that if $(B_0,B_1)\in O(t)$ and $(B_1, B_2)\in O(t)$ then $(B_0,B_2)\in O(\underline{t})$, cf. \cite[\textsection 1.2]{DL}. 
	
	For $t=s$, we see that $\phi$ is the base change of $ O(\underline{t},\underline{t})\to O(\underline{t})$ along the projection map $X(\underline{w_1},\ldots, \underline{w_{r}},\underline{t}, \underline{w_{1}'},\ldots,\underline{w_{m}'})\to O(\underline{t})$ given by $(B_0',\ldots,FB_0')\mapsto (B_{r}', B_{r+1}')$. The first claim follows. 
		
	Consider $t=F(s)\in S$.  One can verify that $\pi$ is $G^F$-equivariant by its definition.  
Let $(B_0,B_1)\in O(\underline{s})$, so $(FB_0, FB_1)\in O(\underline{F(s)})$. Recall that via the projection to the first $(r+2)$-entries, $X(\underline{s},\underline{w_1},\ldots, \underline{w_r},\underline{F(s)})$ can be identified with a closed subscheme of $(G/B)^{r+2}$. Under this identification, we have an embedding
\begin{equation*}
\iota: X(\underline{s},\underline{w_1},\ldots, \underline{w_r},\underline{F(s)})\longrightarrow
	O(\underline{s})\times_{G/B} X(\underline{w_1},\ldots, \underline{w_r},\underline{F(s)}), 
\end{equation*}
given by $(B_0,\ldots, B_{r+1})\mapsto ((B_0,B_1),(B_1,\ldots, B_{r+1}))$.

Since $F(s)\in S$, for any $(B_0,B_1)\in O(\underline{s})$, if there exists a $B'\in G/B$ with $(B',FB_1)\in O(\underline{F(s)}) $, then we automatically have $(B',FB_0)\in O(\underline{F(s)})$. Thus $\iota$ is surjective with inverse given by 
$((B_0',B_1'),(B_1',\ldots, B_{r+1}'))\mapsto (B_0',\ldots, B_{r+1}')$. 
Therefore $X(\underline{s}, \underline{w_1},\ldots, \underline{w_r},\underline{F(s)})$ is isomorphic to $O(\underline{s})\times_{G/B} X(\underline{w_1},\ldots, \underline{w_r},\underline{F(s)})$. It follows that $\pi$ 
is the composition of the isomorphism $\iota$ with
 the projection 
\begin{equation*}
	O(\underline{s})\times_{G/B} X(\underline{w_1},\ldots, \underline{w_r},\underline{F(s)})\longrightarrow X(\underline{w_1},\ldots, \underline{w_r},\underline{F(s)}),
\end{equation*}
which is a Zariski $\mathbb{P}^1$-bundle. 
\end{proof}
\begin{remark}
	The reason for
 the fibre product of $X(\underline{w_1},\ldots, \underline{w_r},\underline{F(s)})$ with  $O(\underline{s})$  to yield a Deligne--Lusztig variety is that 
the orbit of pairs $(B_0,B_1)$ determines the orbit of  $(FB_0,FB_1)$. This allows us to verify that the tuples $(B_0,\ldots , B_{r+1}, FB_0)$  live in the fibre product  of the graph of $F$ and  a product of orbits. In general, taking a fibre product of a Deligne-Lusztig variety with  $O(\underline{s})$ does not necessarily yield another Deligne--Lusztig variety.
	\end{remark}

\begin{lemma}\label{lemtwistedP1fibration}
	Let  $w_1,\ldots, w_r\in W$ and $s\in S$ such that  $\overline{Bw_nB}$ is smooth for $n=1,\ldots, r$.  The  morphisms
	\begin{equation*}
		\tau_2: X(\underline{F^{-1}(s)}, \underline{F^{-1}(w_1)},\ldots, \underline{F^{-1}(w_r)},\underline{s}) \longrightarrow X(\underline{s}, \underline{w_1},\ldots, \underline{w_r}),
	\end{equation*}
	given by $(B_0,B_1,\ldots,B_{r+1},FB_0)\longmapsto (B_{r+1},FB_1,\ldots , FB_{r+1})$,
	and
	\begin{equation*}
	\tau_1:	X(\underline{F^{-1}(s)}, \underline{F^{-1}(w_1)},\ldots, \underline{F^{-1}(w_r)},\underline{s})  \longrightarrow X(\underline{w_1},\ldots, \underline{w_r},\underline{F(s)}),
	\end{equation*}
	given by $(B_0,B_1,\ldots,B_{r+1},FB_0)\longmapsto (FB_1,FB_2\ldots , F^2B_{1})$, are $G^F$-equivariant   fppf $\mathbb{P}^1$-bundles.
\end{lemma}
\begin{proof}
Given $(B_0,B_1)\in O(\underline{s})$, we know that $(FB_0,FB_1)\in O(F(\underline{s}))$. By construction we have $(B_{r+1},FB_0)\in O(F(\underline{s}))$, so $(B_{r+1},FB_1)\in O(F(\underline{s}))$ and $(FB_{r+1},F^2B_1)\in O(F(\underline{s}))$ \cite[\textsection 1.2]{DL}. Thus $\tau_1$ and $\tau_2$ are well-defined. They are  $G^F$-equivariant by construction. 
 
  Let $Z$ be  the fibre product
\begin{equation}\label{fibreproductforz}
\begin{tikzcd}
Z \arrow[r, "r_1"]\arrow[d,"d_1"'] & X(\underline{F^{-1}(s)}, \underline{F^{-1}(w_1)},\ldots, \underline{F^{-1}(w_r)},\underline{s}) \arrow[d, "\tau_2"]
\\
X(\underline{F^{-1}(s)}, \underline{F^{-1}(w_1)},\ldots, \underline{F^{-1}(w_r)})  \arrow[r, "F"] & X(\underline{s}, \underline{w_1},\ldots, \underline{w_r}).
\end{tikzcd}
\end{equation}
Geometric points in $Z$ are denoted as tuples 
\begin{equation*}
	((B_0,B_1,\ldots,B_{r+1},FB_0), (B_0',B_1',\ldots,B_{r}',FB_0')),
\end{equation*}
  where $(B_0,B_1,\ldots,B_{r+1},FB_0)\in  X(\underline{F^{-1}(s)}, \underline{F^{-1}(w_1)},\ldots, \underline{F^{-1}(w_r)},\underline{s})$  and\\
   $(B_0',B_1',\ldots,B_{r}',FB_0')\in X(\underline{F^{-1}(s)}, \underline{F^{-1}(w_1)},\ldots, \underline{F^{-1}(w_r)})$ such that 
$FB_0'=B_{r+1}$, $FB_1'=FB_1, \ldots, FB_r'=FB_{r} , F^2B_0'=FB_{r+1}$. 
Here the only condition on $B_0$ is that $(B_0,B_0')\in O(\underline{F^{-1}(s)})$.

The morphism $d_1$ also fits into another commutative diagram
\begin{equation}\label{fiberprodP1fibZ}
\begin{tikzcd}
Z \arrow[r, "r_2"]\arrow[d,"d_1"'] 
& O(\underline{F^{-1}(s)})\arrow[d, "{\rm pr}_1"] \\
 X(\underline{F^{-1}(s)}, \underline{F^{-1}(w_1)},\ldots, \underline{F^{-1}(w_r)})  \arrow[r, "{\rm pr}_0"]
&  G/B ,
\end{tikzcd}
\end{equation}
where ${\rm pr}_1:O(\underline{s}) \to G/B$ is given by $(B_0,B_1)\mapsto B_1$, and 
$r_2$ is  given by 
	\begin{equation*}
	((B_0,B_1,\ldots,B_{r+1},FB_0), (B_0',B_1',\ldots,B_{r}',FB_0')) \longmapsto (B_0,B_0').
\end{equation*}
The diagram (\ref{fiberprodP1fibZ}) yields a morphism
\begin{equation*}
\iota:	Z \longrightarrow O(\underline{F^{-1}(s)})\times_{G/B}X(\underline{F^{-1}(s)}, \underline{F^{-1}(w_1)},\ldots, \underline{F^{-1}(w_r)}).
\end{equation*}
Given $(B_0',B_1',\ldots,B_{r}',FB_0')\in X(\underline{F^{-1}(s)}, \underline{F^{-1}(w_1)},\ldots, \underline{F^{-1}(w_r)})$ and arbitrary $B_0\in G/B$ with $(B_0,B_0')\in O(\underline{F^{-1}(s)})$, we have $(B_0,B_1')\in O(\underline{F^{-1}(s)})$ and $(FB_0,FB_0')\in O(\underline{s})$. This gives
\begin{equation*}
	((B_0,B_1',\ldots,B_{r}',FB_0',FB_0),(B_0',B_1',\ldots,B_{r}',FB_0'))\in Z,
\end{equation*}
and that $\iota$ is surjective. By the Bruhat decomposition, for every $i=1,\ldots, r$, there exists $v_i\in W$ such that 
$(B_i, B_i')\in O(v_i)$. This implies $(FB_i, FB_i')\in O(F(v_i))$. We know $FB_i=FB_i'$ by assumption, so $F(v_i)=e$ is the neutral element in $W$ and  $v_i=e$ because $F:W\to W$ is an automorphism,  cf.  \cite[\textsection 1.2]{DL}. It follows that $\iota$ is also injective and an isomorphism.  

Hence  $d_1$ is a Zariski $\mathbb{P}^1$-bundle.
Since $F:X(\underline{F^{-1}(s)}, \underline{F^{-1}(w_1)},\ldots, \underline{F^{-1}(w_r)})\to X(\underline{s}, \underline{w_1},\ldots, \underline{w_r})$ is faithfully flat by our assumption on smoothness \cite[Corollary 14.129]{GoerWedhAlgGeom1}, 
 we deduce that $\tau_2$ is faithfully flat by faithfully flat descent \cite[Corollary 14.12, Proposition 14.3(2)]{GoerWedhAlgGeom1}.  

For each  affine open (Zariski)  covering $\{U_{\alpha}\}_{\alpha}$ of $X(\underline{F^{-1}(s)}, \underline{F^{-1}(w_1)},\ldots, \underline{F^{-1}(w_r)})$ that trivialises $d_1$, we can construct
 an fppf covering $\{\left.F\right|_{U_\alpha}:U_{\alpha}\to X(\underline{s}, \underline{w_1},\ldots, \underline{w_r})\}_{\alpha}$ of $X(\underline{s}, \underline{w_1},\ldots, \underline{w_r})$ over which $\tau_2$ trivialises as a $\mathbb{P}^1$-bundle.

 After taking the fibre product of $\tau_1$ with  $F: X(\underline{F^{-1}(w_1)},\ldots, \underline{F^{-1}(w_r)},\underline{s}) \to X(\underline{w_1},\ldots, \underline{w_r},\underline{F(s)})$, the claim for $\tau_1$ can be verified by the analogous argument we used for $\tau_2$ above. 
\end{proof}

\begin{proposition}\label{propcohwrtP1fibration}
Let  $w_1,\ldots, w_r\in W$ and $s\in S$ such that  $\overline{Bw_nB}$ is smooth for $n=1,\ldots, r$. 
There are  $G^F$-equivariant isomorphisms for all $i\geq 0$,
	\begin{equation*}
		H^i\big(X_1,\mathcal{O}_{X_1}\big)\overset{\sim}{\longrightarrow} H^i\big(X_2,\mathcal{O}_{X_2}\big)
	\end{equation*}
	and 
		\begin{equation*}
			H^i\big(X_1,\omega_{X_1}\big)\overset{\sim}{\longrightarrow} H^i\big(X_2,\omega_{X_2}\big)
	\end{equation*}
		induced by  $f:X_1\to X_2$, where $f$ is one of the $\mathbb{P}^1$-bundles  $\phi, \pi, \tau_1$ or $\tau_2$ from Lemma \ref{lemswFstowFsP1} and \ref{lemtwistedP1fibration}. 
\end{proposition}

	\begin{proof}
	The proof is simpler for the Zariski $\mathbb{P}^1$-bundles $\phi$ and $\pi$, so we only record the proof for the fppf $\mathbb{P}^1$-bundles $\tau_1$ and $\tau_2$.
		We use the notations of Lemma \ref{lemtwistedP1fibration}. Since $d_1$ is a Zariski $\mathbb{P}^1$-bundle, it is proper and faithfully flat. Hence  $\tau_1$ and $\tau_2$ are proper and faithfully flat by faithfully flat descent \cite[Proposition 14.53]{GoerWedhAlgGeom1}. 
              
Set $f:=\tau_2$ (resp. $\tau_1$), $X_1:=X(\underline{F^{-1}(s)}, \underline{F^{-1}(w_1)},\ldots, \underline{F^{-1}(w_r)},\underline{s})$, $X_2:=X(\underline{s}, \underline{w_1},\ldots, \underline{w_r})$ (resp. $X( \underline{w_1},\ldots, \underline{w_r},\underline{F(s)})$), and $X_2':=X(\underline{F^{-1}(s)}, \underline{F^{-1}(w_1)},\ldots, \underline{F^{-1}(w_r)})$ (resp. $X(\underline{F^{-1}(w_1)},\ldots, \underline{F^{-1}(w_r)},\underline{s})$).
Consider the Leray spectral sequence for $\mathcal O_{X_1}$:
		\begin{equation*}
			E_2^{i,j}=H^i\leftp X_2,R^jf_*\Osh_{X_1}\rightp \implies H^{i+j}\leftp X_1,\Osh_{X_1}\rightp.
		\end{equation*}
		
		Let $x\in X_2$ and $y\in X_2'$ such that $F(y)=x$ as in diagram (\ref{fibreproductforz}). By Lemma \ref{lemtwistedP1fibration}, the base change of the fibre $x\times_{X_2}X_1$ to $y$ is isomorphic to $\mathbb{P}^1_{y}$. It follows from flat base change that $H^j(x\times_{X_2}X_1, \Osh_{x\times_{X_2}X_1})=0$ for all $j\geq 0$ \cite[Corollary 22.91]{GoerWedhAlgGeom2}. 
		We conclude by Grauert's theorem \cite[Proposition 23.142]{GoerWedhAlgGeom2} (cf. \cite[Proposition 3.35]{GoerWedhAlgGeom1}) that $R^jf_*\Osh_{X_1}=0$ for all $j>0$. Hence the Leray spectral sequence degenerates.
		
		We have an fppf covering $\mathfrak{U}=\{\gamma_{\alpha}:U_{\alpha}\to X_2\}_{\alpha}$ of $X_2$ in Lemma \ref{lemtwistedP1fibration}, such that the base change 
		 $f_{\alpha}:X_1\times_{X_2}U_{\alpha}\to U_{\alpha}$  of $f$ along $\gamma_{\alpha}$ is a trivial $\mathbb{P}^1$-bundle for all $\alpha$. 
		 Since $U_{\alpha}$ is affine and
		 $\gamma_{\alpha}$ is an affine morphism for all $\alpha$  \cite[Proposition 12.3(3)]{GoerWedhAlgGeom1},
		  any simplex $V$ in the \v{C}ech nerve of $\mathfrak{U}$ is affine and for any coherent sheaf $\mathcal{F}$ on $X_2$, we have $H^i(X_2, \mathcal{F})\overset{\sim}{\longrightarrow}\check{H}^i(\mathfrak{U},\mathcal{F})$ for all $i\geq 0$ \cite[Theorem II.5.4.1, Corollary]{Godement}. Note that $f_*\Osh_{X_1}$ is a coherent $\Osh_{X_2}$-module because $f$ is proper and cohomology of coherent sheaves is independent of  topology \cite[03DW]{stacks-project}. 
		  
		  For each simplex $V$ in the \v{C}ech nerve of $\mathfrak{U}$, we have $V\times_{X_2}X_1\overset{\sim}{\longrightarrow}V\times_{\Spec \ k}\mathbb{P}^1$, so $f_*\Osh_{X_1}(V)=\Osh_V(V)$ by
		flat base change \cite[Proposition 22.90]{GoerWedhAlgGeom2}. This identification is compatible with restriction of sections, so the \v{C}ech complexes on $\mathfrak{U}$ for $f_*\Osh_{X_1}$ and $\Osh_{X_2}$ are identical and $H^i(X_2, \Osh_{X_2})\overset{\sim}{\longrightarrow} H^i(X_2, f_*\Osh_{X_1})$ for all $i\geq 0$. 
		
	Finally, degeneration of the Leray spectral sequence  yields	
 $G^F$-equivariant 
isomorphisms
								\begin{equation*}
					H^i\leftp X_2,\Osh_{X_2}\rightp \overset{\sim}{\longrightarrow} H^{i}\leftp X_1,\Osh_{X_1}\rightp,
				\end{equation*}
				for all $i\geq 0$.

	The $G^F$-equivariant isomorphisms for the cohomology of $\omega_{X_1}$ and $\omega_{X_2}$ follow from Serre duality for Deligne--Mumford stacks \cite[Corollary 2.10]{Nironi}.
	 
	\end{proof}
	
	\begin{corollary}
		\label{corcohisomcyclicshiftingoperator}
Let $s\in S$. Let $w=sv$ and $w'=vF(s)$ be elements of $S^*$. There are  $G^F$-equivariant isomorphisms for all $i\geq 0$,
	\begin{equation*}
		H^i\Big(\XF{w}, \mathcal{O}_{\XF{w}}\Big)\overset{\sim}{\longrightarrow}H^i\Big(\XF{w'}, \mathcal{O}_{\XF{w'}}\Big)
	\end{equation*}
	and 
		\begin{equation*}
		H^i\Big(\XF{w}, \omega_{\XF{w}}\Big)\overset{\sim}{\longrightarrow}H^i\Big(\XF{w'}, \omega_{\XF{w'}}\Big).
	\end{equation*}	
		\end{corollary}
\begin{proof}
	This follows from Proposition \ref{propcohwrtP1fibration} by composing the corresponding isomorphisms of cohomology groups for $\tau_1$ and $\tau_2$. 
	\end{proof}

\subsection{$F$-conjugacy classes in $W$}\label{subsecFconjugacyclassesandFcyclicshifts}
 Two elements $w,w'\in W$ are $F$-conjugate if $w'=v^{-1}wF(v)$ for some $v\in W$. 
 The $F$-conjugacy classes in $W$ give a partition of $W$. An element $w\in W$ is of \emph{minimal length} if it has the smallest Bruhat length in its $F$-conjugacy class. 
 
 Minimal length elements were studied by Geck, Kim, and Pfeiffer \cite{GeckKimPfeifferMinlength}, as well as He and Nie \cite{HeNieminlength}.  In our context, the results of these two papers imply that 
 that for any $w\in W$, there exists a
minimal length  element $w'$  in the $F$-conjugacy class of $w$ such that $w\to_F w'$ (cf. Definition \ref{dfncyclicshift}).  In combination with Proposition \ref{propcohwrtP1fibration}, we obtain a framework for  an inductive approach for studying cohomology groups of Deligne--Lusztig varieties.

\begin{definition}\label{dfncyclicshift}
	We write $w\to_F w'$ if there exist $w_1,\ldots, w_r\in W$ and $s_1,\ldots, s_{r-1}\in S$ such that $w_1=w, w_r=w'$, $w_{i}=s_{i-1}w_{i-1}F(s_{i-1})$, and $\ell(w_i)\leq \ell(w_{i-1})$ for all $i=2,\ldots, r$.
\end{definition}

Furthermore, for any pairs of minimal length elements $w$ and $w'$ in an \emph{elliptic $F$-conjugacy class} $C$ in $W$, we have  $w\to_F w'$ and  $w'\to_F w$, cf. \cite[Corollary 4.4]{HeNieminlength}. The definition of an elliptic $F$-conjugacy class, which we do not recall here, can be found in \textsection 4.1 of loc. cit.

\section{Cohomology of compactified Deligne--Lusztig varieties}\label{seccohofcompactified}

\subsection{Main theorem and applications}

\begin{theorem}\label{mainthm}
For each $w\in S^*$, there exists 
an $F$-conjugacy class $C$ in $W$ and a minimal length element $x_0\in C$ such that for any  reduced expression $x\in S^*$ of $x_0$, there are $G^F$-equivariant isomorphisms
	\begin{equation*}
		H^i\Big(\XF{w},\Osh_{\XF{w}}\Big)\overset{\sim}{\longrightarrow} H^i\Big(\XF{x},\Osh_{\XF{x}}\Big)
	\end{equation*}
	and
	\begin{equation*}
		H^i\Big(\XF{w},\omega_{\XF{w}}\Big)\overset{\sim}{\longrightarrow} H^i\Big(\XF{x},\omega_{\XF{x}}\Big),
	\end{equation*}
 for all $i\geq 0$.

If $C$ is moreover an elliptic conjugacy class in $W$, then 
 the isomorphisms above hold if we replace $x_0$ by any other minimal length element in $C$.
		\end{theorem}

\begin{proof}
We prove the theorem for the structure sheaf. The results for the canonical sheaf follows from Serre duality for Deligne--Mumford stacks \cite[Corollary 2.10]{Nironi}.

Let  $w\in S^*$. We showed in Proposition \ref{propcohwrtbraidrelations} that the cohomology for the structure sheaf of $\XF{w}$ and $\XF{w'}$ are isomorphic for any other lift $w'$ of $\beta(w)$ under the projection $\beta:S^*\twoheadrightarrow B^+$.

Using the description of the submonoid of reduced elements in $B^+$ given by Matsumoto's theorem \cite[Theorem 1.2.2, Exercise 4.1]{GeckPfeiffer}, we know that the criterion for $u\in B^+$ to come from a reduced expression in the Weyl group is that $u$ has no subexpression of the form $ss$ for any $s\in S$.

If $w=w_1ssw_2$, 
we apply Lemma \ref{lemswFstowFsP1} and Proposition \ref{propcohwrtP1fibration} to obtain $w_1sw_2$ such that $\XF{w}$ and $\XF{w_1sw_2}$ have the same cohomology for the structure sheaf. Inductively, this yields an element
 $v\in S^*$ with $\ellm{v}\leq \ellm{w}$ such that $v$ has no subexpression of the form $ss$ for any $s\in S$ and for all $i\geq 0$, 
\begin{equation*}
	H^i\Big(\XF{v},\Osh_{\XF{v}}\Big)\overset{\sim}{\longrightarrow}H^i\Big(\XF{w}, \Osh_{\XF{w}}\Big).
\end{equation*}
In other words, we may assume $v\in S^*$ gives a reduced expression of $\alpha(v)$ in $W$. 

By \cite[Theorem 2.6]{GeckKimPfeifferMinlength}, there exists $v'\in W$ which is of minimal length in its $F$-conjugacy class such that $\alpha(v)\to_F v'$ and $\ell(v')\leq \ell(\alpha(v))$. Suppose $\alpha(v)$ is  a minimal length element in its $F$-conjugacy class. By Matsumoto's theorem \cite[Theorem 1.2.2]{GeckPfeiffer}, 
there exists a lift $y\in B^+$ of $\alpha(v)$ such that any reduced expression $x\in S^*$ of $\alpha(v)$ satisfies $\beta(x)=y$. The statement of the theorem thus follows from Proposition \ref{propcohwrtbraidrelations}. 

In general, $\alpha(v)\to_F v'$ gives $v_1',\ldots,v_m'\in W, s_1,\ldots s_{m-1}\in S$ such that $v_1'=\alpha(v), v_m'=v'$  and $v_j'=s_{j-1}v_{j-1}'F(s_{j-1})$ for all $j=2,\ldots,m$. 
Let $a\in \{2,\ldots, m\}$ be the smallest integer such that $\ell(v_a')<\ell(v_{a-1}')$. For all $j=1,\ldots ,a$, let $v_j\in S^*$ be the canonical lift of the corresponding reduced expression of $v_j'$. 
Up to taking different lifts of braid relations, for each $j=2,\ldots ,a-1$, we have $v_j=s_{j-1}z_{j-1}$ (resp. $v_j=z_{j-1}F(s_{j-1})$) and $v_{j-1}=z_{j-1}F(s_{j-1})$ (resp. $v_{j-1}=s_{j-1}z_{j-1}$) for some $z_{j-1}\in S^*$ (cf. \cite[Remark 2.3]{GeckKimPfeifferMinlength}).
Thus it follows from Corollary \ref{corcohisomcyclicshiftingoperator} that $\XF{v_j}$ all have the same cohomology for the structure sheaf as $\XF{v}$ for $j=2,\ldots , a-1$. 

On the other hand, we have $v_{a-1}=s_{a-1}v_aF(s_{a-1})$ in $S^*$. 
We know that $\XF{v}$ has the same cohomology as $\XF{s_{a-1}v_a}$ and $\XF{v_aF(s_{a-1})}$ by  Proposition
\ref{propcohwrtP1fibration}. In this case, we reset $v$ to be $s_{a-1}v_a$. This process will terminate and yields an element $x_0\in W$ which is of minimal length in its $F$-conjugacy class, cf. \cite[Theorem 2.6]{GeckKimPfeifferMinlength}.

It remains to consider the case when $C$ is an elliptic conjugacy class in $W$. By \cite[Corollary 4.4]{HeNieminlength}, we have $x\to_F y$ for any two minimal length elements $x,y\in C$.
The corresponding
 cohomology of $\XF{x}$ and $\XF{y}$ for the structure sheaf are isomorphic by  Corollary \ref{corcohisomcyclicshiftingoperator}.
\end{proof}

\begin{remark}
	We use the same notations as in \Cref{mainthm} and set $I={\supp}_F(x)$. There are $G^F$-equivariant morphisms  given by \cite[Proposition 2.3.8]{DMR}:
\begin{equation*}
	\XF{x}\overset{\sim}{\longrightarrow}G^F/U_I^F\times^{L_I^F} {\overline{X}^{S^*}_{L_I}(x)} \overset{\pi_I}{\longrightarrow} G^F/P_I^F,
\end{equation*}
where   $P_I=BW_IB$ is the parabolic subgroup with unipotent radical $U_I$  satisfying $P_I=L_I\ltimes U_I$ for a Levi subgroup $L_I$ of $G$. Since $G^F$ acts on the fibres of $\pi_I$ transitively, we have a $G^F$-equivariant isomorphism
	\begin{equation*}
		H^i\Big(\XF{x},\Osh_{\XF{x}}\Big)\overset{\sim}{\longrightarrow} {\rm Ind}_{P_I^F}^{G^F}H^i\Big(\overline{X}^{S^*}_{L_I}(x),\Osh_{\overline{X}^{S^*}_{L_I}(x)}\Big).
	\end{equation*}
for all $i\geq 0$. 

\end{remark}

\subsection{Suzuki and Ree groups}\label{secsuzukiree}

Here we recall the assumptions needed on $d$ and $q$ for $(G,F)$ of type $^{2}B_2$, $^{2}F_4$, and $^{2}G_2$.

In addition to the assumptions \Cref{secbackground}, let $d=2$ and let $q$ be an odd power of $\sqrt{2}$ (resp. $\sqrt{2}$, $\sqrt{3}$).
Let $G$ be of type $B_2$ (resp.  $F_4$, $G_2$). Then the endomorphism $F:G\to G$  induces an automorphism of $W$ which preserves $S$ 
 and  $F^2$ is the $q^2$-Frobenius \cite[\textsection 4.3.7]{DigneMichelBook}.
 The group of fixed points $G^{F}$ is the Suzuki group  $^{2}B_2(q)$ (resp. Ree groups $^{2}F_4(q)$, and $^{2}G_2(q)$).

\subsection*{}\textit{Acknowledgements}

The author thanks Jochen Heinloth and Georg Linden for their comments on an earlier draft of this paper. 
Thanks to Ulrich G\"{o}rtz for helpful conversations.  

Parts of the work are carried out when the author was a  member of the trimester program "The Arithmetic of the Langlands Program" at
the Hausdorff Research Institute for Mathematics in Bonn  and when the author was a member of the Research Training Group 2553 in Essen, both funded by the DFG (Deutsche Forschungsgemeinschaft).

	\printbibliography

	\textsc{Fakult\"{a}t f\"{u}r Mathematik, Universit\"{a}t Duisburg-Essen, 45127 Essen, Germany}

  \textit{E-mail address:}
\email{yingying.wang@uni-due.de}
	
\end{document}